\newtheorem{theorem}{Theorem}
\newtheorem{definition}[theorem]{Definition}
\newtheorem{lemma}[theorem]{Lemma}
\theoremstyle{plain}
\newtheorem*{propposition}{Proposition}
\theoremstyle{remark}
\newtheorem*{remmark}{Remark}
\begin{document}

\title{On the dynamics of endomorphisms of finite groups}

\author{Alexander Bors\thanks{The author is supported by the Austrian Science Fund (FWF):
Project F5504-N26, which is a part of the Special Research Program \enquote{Quasi-Monte Carlo Methods: Theory and Applications}. \newline 2010 \emph{Mathematics Subject Classification}: 05C38, 05C60, 05C76, 05E15, 20D45, 20D60, 37P99. \newline \emph{Key words and phrases:} Finite dynamical system, finite group, group endomorphisms, state space}}

\date{}

\maketitle

\begin{abstract}
Aiming at a better understanding of finite groups as finite dynamical systems, we show that by a version of Fitting's Lemma for groups, each state space of an endomorphism of a finite group is a graph tensor product of a finite directed $1$-tree whose cycle is a loop with a disjoint union of cycles, generalizing results of Hern{\'a}ndez-Toledo on linear finite dynamical systems, and we fully characterize the possible forms of state spaces of nilpotent endomorphisms via their \enquote{ramification behavior}. Finally, as an application, we will count the isomorphism types of state spaces of endomorphisms of finite cyclic groups in general, extending results of Hern{\'a}ndez-Toledo on primary cyclic groups of odd order.
\end{abstract}

\section{Some background}

Finite dynamical systems have recently gained a lot of interest not only within mathematics, but also for their practical applications in areas such as cryptography, pseudorandom number generation and reverse engineering. For example, one approach to study gene regulatory networks is to discretize both the data and the time flow and then work in a finite dynamical system of the form $(k^n,f)$, where $k$ is a finite field and $f$ a (polynomial) map $k^n\rightarrow k^n$, see \cite{JLSS07a}. Such so-called polynomial finite dynamical systems are also objects of current theoretical research, and there are still many open questions.

However, there is a well-established theory of so-called \textit{linear finite dynamical systems} (a special case, abbreviated henceforth by LFDSs). These consist of a finite-dimensional vector space $V$ over a finite field together with a linear map $f:V\rightarrow V$. The first results (written in the language of circuit theory) are on the case where $f$ is an automorphism of $V$ and are due to Elspas from 1959, see \cite{Els59a}. Much later, in 2005, Hern{\'a}ndez-Toledo extended these results to LFDSs in general, see \cite{Her05a}. The results give strong restrictions on the possible forms of state spaces compared to arbitrary finite dynamical systems, for example, all state spaces of LFDSs are graph tensor products of a $1$-tree whose cycle is a loop (representing the nilpotent part of $f$, $\mathrm{nil}(f)$) with a disjoint union of cycles (representing the periodic part of $f$, $\mathrm{per}(f)$), and $V$ decomposes as a direct sum of $\mathrm{nil}(f)$ and $\mathrm{per}(f)$.

In this paper, we generalize the results on LFDSs, but going in a different direction than usually: What if not $f$ is replaced by a more complicated polynomial map, but we keep the \enquote{nice} property of $f$ being an endomorphism and instead replace the vector space structure on the underlying set by a group structure (note that any vector space endomorphism is in particular an endomorphism of the underlying additive group)? It turns out that the basic results on LFDSs mentioned in the last paragraph can be transferred to this more general situation. A first indication of this fact can be found in the 2012 paper \cite{Sha12a}, where Sha shows that state spaces of endomorphisms of finite cyclic groups are graph tensor products as described above. Also, as we will see, the group laws impose strong restrictions on the form of the $1$-tree representing the nilpotent part.

\section{Results on the structure of the state space}

Let us first fix some notation and terminology. We denote by $\mathbb{N}$ the set of natural numbers (including $0$) and by $\mathbb{N}^+$ the set of positive integers. As usual, a \textit{finite dynamical system} (FDS) is a pair $(X,f)$ where $X$ is a finite set (whose elements will be referred to as \textit{points}) and $f$ a so-called \textit{endofunction of $X$}, that is, a function $X\rightarrow X$. For $n\in\mathbb{N}$, $f^n$ denotes the $n$-th iteration of $f$ (i.e., the $n$-th power of $f$ in the monoid of endofunctions of $X$). Points $x$ such that, for some positive $n$, $f^n(x)=x$ are called \textit{periodic}, and the smallest such $n$ is called the \textit{period of $x$}. Points which are not periodic are called \textit{transient}. For any transient point $y$, there exists a least positive integer $h$ such that $f^h(y)$ is periodic; this $h$ is called the \textit{height of $y$}, denoted $\mathrm{ht}(y)$. Following the terminology in \cite{LP01a}, the \textit{state space} of $(X,f)$, denoted $\Gamma_f$, is the digraph with vertex set $X$ which has a directed edge from $x$ to $y$ if and only if $y=f(x)$. By the general theory of FDSs, $\Gamma_f$ always is a directed $1$-forest. For FDSs $(X,f),(Y,g)$, the FDS $(X\times Y,f\times g)$, where $f\times g$ is the endofunction of $X\times Y$ mapping $(x,y)\mapsto(f(x),g(y))$, is called the \textit{product of $(X,f)$ and $(Y,g)$}. Observe that $\Gamma_{f\times g}=\Gamma_f\times\Gamma_g$, where the $\times$ on the RHS denotes the graph (tensor) product. An \textit{isomorphism between FDSs $(X,f)$ and $(Y,g)$} is a bijection $\alpha:X\rightarrow Y$ such that $\alpha\circ f=g\circ\alpha$. It is easy to see that $\alpha$ is an isomorphism between $(X,f)$ and $(Y,g)$ if and only if $\alpha$ is an isomorphism between the state spaces $\Gamma_f$ and $\Gamma_g$.

From now on, we will always consider the situation where $X$ is a finite group $G$ (or, more precisely, its underlying set) and $f$ is a group endomorphism $\varphi$ of $G$; such FDSs will be referred to as \textit{finite dynamical groups} (FDGs). We set $\mathrm{nil}(\varphi):=\{g\in G\mid\exists n\in\mathbb{N}^+:\varphi^n(x)=1\}$ and define $\mathrm{per}(\varphi)$ as the set of periodic points of $\varphi$. As in the case of LFDSs, $\mathrm{nil}(\varphi)$ will be called the \textit{nilpotent part} and $\mathrm{per}(\varphi)$ the \textit{periodic part} of $\varphi$; if $\mathrm{nil}(\varphi)=G$, $\varphi$ is called \textit{nilpotent}. Note that by definition, $\mathrm{nil}(\varphi)$ is the union of the subsets $\mathrm{ker}^{(m)}(\varphi)$ of $G$ for $m\in\mathbb{N}$, where $\mathrm{ker}^{(m)}(\varphi)$, which we will call the \textit{$m$-th kernel of $\varphi$}, is just the $m$-th preimage of $\{1\}$ under $\varphi$. The result that, in case of an LFDS $(V,f)$, $V$ directly decomposes into $\mathrm{nil}(f)$ and $\mathrm{per}(f)$ generalizes to:

\begin{theorem}\label{composTheo}
Let $(G,\varphi)$ be an FDG. Then:

(1) $\mathrm{nil}(\varphi)$ is the largest subgroup of $G$ invariant under $\varphi$ on which the corresponding restriction of $\varphi$ is nilpotent. Also, $\mathrm{nil}(\varphi)$ is normal in $G$.

(2) $\mathrm{per}(\varphi)$ is the largest subgroup of $G$ invariant under $\varphi$ on which the corresponding restriction of $\varphi$ is an automorphism.

(3) $G=\mathrm{nil}(\varphi)\rtimes\mathrm{per}(\varphi)$.

(4) The FDS $(G,\varphi)$ is the product of the FDSs $(\mathrm{nil}(\varphi),\varphi_{|\mathrm{nil}(\varphi)})$ and $(\mathrm{per}(\varphi),\varphi_{|\mathrm{per}(\varphi)})$. In particular, $\Gamma_{\varphi}$ is the product of a $1$-tree whose cycle is a loop with a disjoint union of cycles.
\end{theorem}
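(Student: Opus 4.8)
The plan is to run a Fitting-type argument adapted to finite groups, the key observation being that finiteness forces the ascending chain of kernels $\ker(\varphi)\subseteq\ker(\varphi^2)\subseteq\cdots$ and the descending chain of images $\varphi(G)\supseteq\varphi^2(G)\supseteq\cdots$ to stabilize. Writing $\mathrm{ker}^{(m)}(\varphi)=\ker(\varphi^m)$, the relation $|\ker(\varphi^m)|\cdot|\varphi^m(G)|=|G|$ coming from the first isomorphism theorem shows that both chains stabilize at a common index $n$. I would first record the standard fact that once $\ker(\varphi^n)=\ker(\varphi^{n+1})$ equality persists for all larger exponents, and then identify $\mathrm{nil}(\varphi)=\ker(\varphi^n)$ and $\mathrm{per}(\varphi)=\varphi^n(G)$: one inclusion in each case is immediate from the definitions, and the reverse uses stabilization (for $\mathrm{nil}$) together with the fact that a periodic point $g$ with $\varphi^p(g)=g$ lies in $\varphi^{kp}(G)=\varphi^n(G)$ as soon as $kp\geq n$ (for $\mathrm{per}$).

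With these identifications in hand, parts (1) and (2) follow quickly. Since $N:=\mathrm{nil}(\varphi)=\ker(\varphi^n)$ is literally the kernel of the endomorphism $\varphi^n$, it is automatically normal in $G$; it is clearly $\varphi$-invariant, and $\varphi^n$ kills it, so $\varphi|_N$ is nilpotent. For maximality, any $\varphi$-invariant subgroup $H$ with $\varphi|_H$ nilpotent satisfies $\varphi^m(H)=1$ for some $m$, whence $H\subseteq\ker(\varphi^m)\subseteq N$. Dually, $P:=\mathrm{per}(\varphi)=\varphi^n(G)$ is a $\varphi$-invariant subgroup on which $\varphi$ acts surjectively (because $\varphi^{n+1}(G)=\varphi^n(G)$), hence bijectively by finiteness, so $\varphi|_P$ is an automorphism; and any $\varphi$-invariant $H$ on which $\varphi$ restricts to an automorphism has $\varphi^d|_H=\mathrm{id}_H$ for $d$ the order of that automorphism, forcing $H\subseteq P$.

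For (3), I would show $N\cap P=\{1\}$ by the same cancellation used above, namely that an element which is both eventually trivial and periodic must be the identity, and then invoke $|N|\cdot|P|=|G|$ (the first isomorphism theorem applied to $\varphi^n$) to conclude $NP=G$. Combined with the normality of $N$ from (1), this is exactly the internal semidirect product $G=N\rtimes P$. In contrast with the module case one should not expect a direct product here, since $P$ need not be normal; this is the one genuinely group-theoretic wrinkle.

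Finally, (4) is obtained by checking that the multiplication map $\alpha:N\times P\to G$, $(a,b)\mapsto ab$, which is a bijection by the internal semidirect decomposition, intertwines $\varphi|_N\times\varphi|_P$ with $\varphi$; this is immediate because $\varphi$ is a homomorphism and $\varphi(a)\in N$, $\varphi(b)\in P$ by invariance, so $\alpha$ is an isomorphism of FDSs and hence $\Gamma_\varphi=\Gamma_{\varphi|_N}\times\Gamma_{\varphi|_P}$. Since $\varphi|_N$ is nilpotent, every point of $N$ flows into the fixed point $1$, so $\Gamma_{\varphi|_N}$ is a $1$-tree whose only cycle is the loop at $1$; since $\varphi|_P$ is an automorphism, $\Gamma_{\varphi|_P}$ is the functional graph of a permutation, i.e.\ a disjoint union of cycles. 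I expect the main obstacle to be the clean identification of $\mathrm{nil}(\varphi)$ and $\mathrm{per}(\varphi)$ with $\ker(\varphi^n)$ and $\varphi^n(G)$ at the common stabilization index, together with the verification that these are honest subgroups; once that bookkeeping is in place, normality and the semidirect decomposition drop out with little extra effort.
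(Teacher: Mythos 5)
Your proposal is correct, and its skeleton matches the paper's: both identify $\mathrm{nil}(\varphi)$ and $\mathrm{per}(\varphi)$ with the stable member of the ascending chain of kernels $\mathrm{ker}^{(m)}(\varphi)$ and the descending chain of images $\mathrm{im}(\varphi^n)$, respectively, which immediately gives that they are subgroups (with $\mathrm{nil}(\varphi)$ normal) and settles (1) and (2). The difference is in how part (3) is handled: the paper simply cites a group-theoretic version of Fitting's Lemma (Theorem 4.2 of Caranti, \emph{Quasi-inverse endomorphisms}) as a black box, whereas you prove the decomposition from scratch --- trivial intersection via the observation that a point which is both periodic and eventually trivial must be the identity, and $NP=G$ via the order count $|\mathrm{ker}(\varphi^n)|\cdot|\varphi^n(G)|=|G|$ from the first isomorphism theorem together with the product formula $|NP|=|N||P|/|N\cap P|$. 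Likewise, for (4) the paper says the claim is \enquote{clear by (3) and the structure of semidirect products}, while you exhibit the explicit FDS isomorphism $(a,b)\mapsto ab$ and verify the intertwining, which is exactly the content being waved at. What the paper's route buys is brevity and a pointer into the literature; what yours buys is self-containedness, and it makes visible the one group-theoretic subtlety that the citation hides, namely that the complement $\mathrm{per}(\varphi)$ need not be normal, so one gets only a semidirect (not direct) product --- a point you correctly flag. Your bookkeeping at the common stabilization index (kernel sizes nondecreasing, image sizes nonincreasing, product constant) is also sound.
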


\begin{proof}
For (1) and (2), note that it suffices to show that $\mathrm{nil}(\varphi)$ and $\mathrm{per}(\varphi)$ are subgroups (and $\mathrm{nil}(\varphi)$ normal), which is clear by observing that $\mathrm{nil}(\varphi)$ is the maximum (with respect to inclusion) of the ascending chain of normal subgroups $(\mathrm{ker}^{(m)}(\varphi))_{m\in\mathbb{N}}$ and $\mathrm{per}(\varphi)$ is the minimum of the descending chain of subgroups $(\mathrm{im}(\varphi^n))_{n\in\mathbb{N}}$.

From these observations, (3) immediately follows from the group version of Fitting's Lemma stated and proved as Theorem 4.2 in \cite{Car13a}, and (4) is clear by (3) and the structure of semidirect products.
\end{proof}

We now turn to the structure of the tree from the nilpotent part. First, some terminology:

\begin{definition}\label{rigidProcDef}
Let $\Gamma=(V,E)$ be a finite digraph, $v\in V$.

(1) A vertex $w\in V$ such that $(v,w)\in E$ is called a \textbf{successor} or \textbf{child} of $v$.

(2) The \textbf{procreation behavior of $v$} is the sequence $(a_k)_{k\in\mathbb{N}^+}$ such that for all positive integers $k$, $a_k$ is the number of children $c$ of $v$ such that there exists a directed path $(w_1,\ldots,w_k)$ in $\Gamma$ with $w_1=c$ (we say: $c$ \textbf{has (at least) $k-1$ successor generations} and call $a_k$ the \textbf{$k$-th procreation number of $v$}). For $n\in\mathbb{N}$, the \textbf{procreation behavior of length $n$ of $v$} is the $n$-tuple consisting of the first $n$ procreation numbers of $v$.

(3) We say that $\Gamma$ has \textbf{rigid procreation} if and only if for all $v,w\in V$ and all $n\in\mathbb{N}$ such that $v$ and $w$ both have $n$ successor generations, the procreation behaviors of length $n$ of $v$ and $w$ are equal.
\end{definition}

For any digraph $\Gamma=(V,E)$, the \textbf{dual digraph of $\Gamma$}, denoted $\Gamma^{\ast}$, is defined as $(V,E^{-1})$ with $E^{-1}$ the inverse relation of $E$, i.e., the set of all pairs $(y,x)$ such that $(x,y)\in E$.

\begin{theorem}\label{rigidProcTheo}
Let $(G,\varphi)$ be an FDG. Then $\Gamma_{\varphi}^{\ast}$ has rigid procreation.
\end{theorem}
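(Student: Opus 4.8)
The plan is to translate the combinatorial notions of Definition \ref{rigidProcDef} into purely algebraic statements about $\varphi$ and then to reduce rigid procreation to a single coset-counting identity. First I would record how passing to the dual reverses the dynamics: since an edge $x\to y$ of $\Gamma_{\varphi}$ means $y=\varphi(x)$, in $\Gamma_{\varphi}^{\ast}$ the children of a vertex $v$ are exactly the elements of $\varphi^{-1}(v)$, and a directed path $(w_1,\ldots,w_k)$ with $w_1=c$ in $\Gamma_{\varphi}^{\ast}$ corresponds to a chain of successive $\varphi$-preimages, i.e.\ to an element $u$ with $\varphi^{k-1}(u)=c$. Consequently, a vertex $c$ has at least $m$ successor generations in $\Gamma_{\varphi}^{\ast}$ if and only if $c\in\mathrm{im}(\varphi^m)$; in particular the hypothesis ``$v$ has $n$ successor generations'' becomes simply ``$v\in\mathrm{im}(\varphi^n)$''.

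With this dictionary, the $k$-th procreation number of $v$ in $\Gamma_{\varphi}^{\ast}$ is $a_k(v)=|\varphi^{-1}(v)\cap\mathrm{im}(\varphi^{k-1})|$, and the theorem reduces to showing that for every $v\in\mathrm{im}(\varphi^n)$ and every $k$ with $1\le k\le n$, this count does not depend on $v$. The key step is the following coset computation. Write $K=\ker(\varphi)$ and $I=\mathrm{im}(\varphi^{k-1})$, a subgroup of $G$. Since $v\in\mathrm{im}(\varphi^n)$, I may write $v=\varphi^n(z)$ and choose the preimage $x_0:=\varphi^{n-1}(z)$, which lies in $\mathrm{im}(\varphi^{n-1})\subseteq I$ because $n-1\ge k-1$. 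As $\varphi^{-1}(v)=x_0 K$, and as $x_0\in I$ with $I$ a subgroup, an element $x_0\kappa$ (with $\kappa\in K$) lies in $I$ precisely when $\kappa\in I$; hence $\varphi^{-1}(v)\cap I=x_0(K\cap I)$, which has cardinality $|K\cap I|$.

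This yields $a_k(v)=|\ker(\varphi)\cap\mathrm{im}(\varphi^{k-1})|$ for all $v\in\mathrm{im}(\varphi^n)$ and all $k\le n$, a quantity depending only on $\varphi$ and $k$ and not on $v$. Thus any two vertices $v,w$ that both possess $n$ successor generations have equal procreation behavior of length $n$, which is exactly rigid procreation. I expect the main point requiring care to be the choice of the preimage $x_0$ inside the correct image subgroup $\mathrm{im}(\varphi^{k-1})$: it is this alignment, available precisely because $v$ is deep enough in the image filtration (the condition $k\le n$), that collapses the a priori $v$-dependent intersection $x_0K\cap\mathrm{im}(\varphi^{k-1})$ to the fixed group $\ker(\varphi)\cap\mathrm{im}(\varphi^{k-1})$. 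A secondary technical point is to confirm the path/image correspondence in the dual, where one should note that following children in $\Gamma_{\varphi}^{\ast}$ is the same as iterating $\varphi$-preimages, so that existence of a directed path of the required length from $c$ is equivalent to membership of $c$ in the appropriate image.
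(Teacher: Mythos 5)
Your proof is correct, and it takes a genuinely different (and shorter) route than the paper's. The paper reduces everything to a comparison with the vertex $1_G$: it first disposes of periodic points using the decomposition of Theorem \ref{composTheo}(4), then, for a transient vertex $v$ with $n$ successor generations, fixes a descendant $w$ with $\mathrm{ht}(w)=\mathrm{ht}(v)+n$, proves a unique-representation lemma (every element of the subtree under $v$ is uniquely of the form $\varphi^k(w)\cdot x$ with $x\in\mathrm{ker}^{(n-k)}(\varphi)$), and from this builds an explicit translation bijection $x\mapsto\varphi^{n-1}(w)\cdot x$ between the relevant children of $1_G$ and those of $v$. You avoid both the case distinction and any use of Theorem \ref{composTheo} by encoding \enquote{has $m$ successor generations} as membership in $\mathrm{im}(\varphi^m)$ and then computing the fiber intersection $\varphi^{-1}(v)\cap\mathrm{im}(\varphi^{k-1})$ as a coset of $\mathrm{ker}(\varphi)\cap\mathrm{im}(\varphi^{k-1})$; the crucial (and correct) step is your choice of base point $x_0=\varphi^{n-1}(z)\in\mathrm{im}(\varphi^{n-1})\subseteq\mathrm{im}(\varphi^{k-1})$, available exactly because $k\le n$. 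What your route buys: a uniform closed formula $a_k(v)=|\mathrm{ker}(\varphi)\cap\mathrm{im}(\varphi^{k-1})|$, which moreover yields Theorem \ref{stateGraphTheo} almost for free --- the map induced by $\varphi^{k-1}$ gives $\mathrm{ker}^{(k)}(\varphi)/\mathrm{ker}^{(k-1)}(\varphi)\cong\mathrm{ker}(\varphi)\cap\mathrm{im}(\varphi^{k-1})$, hence $a_k=[\mathrm{ker}^{(k)}(\varphi):\mathrm{ker}^{(k-1)}(\varphi)]$, and Lagrange applied to the inclusion $\mathrm{ker}(\varphi)\cap\mathrm{im}(\varphi^{m-1})\leq\mathrm{ker}(\varphi)\cap\mathrm{im}(\varphi^{n-1})$ for $n\le m$ gives $a_m\mid a_n$. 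What the paper's route buys instead is constructive content: explicit translation isomorphisms between the subtrees hanging off different vertices, exhibiting the self-similarity of the tree directly. One point you flagged that deserves to be made explicit in a final write-up: the equivalence \enquote{at least $m$ successor generations $\Leftrightarrow$ $v\in\mathrm{im}(\varphi^m)$} requires reading \enquote{directed path} in Definition \ref{rigidProcDef} as allowing repeated vertices (i.e., a walk); this is indeed the paper's reading, since its own proof asserts that periodic points have infinitely many successor generations, which would be false in a finite digraph under the vertex-distinct reading.
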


\begin{proof}
First, note that periodic points of $\varphi$ have infinitely many successor generations in $\Gamma_{\varphi}^{\ast}$ and that it suffices to show that any point $v\in G$ which has $n$ successor generations has the same procreation behavior of length $n$ as $1_G$. This is clear for periodic points by the structure of $\Gamma_{\varphi}$ exhibited in Theorem \ref{composTheo}(4) which implies that the $k$-th procreation coefficient of any periodic point is $1$ plus the number of successors of $1_G$ in $\Gamma_{\varphi_{|\mathrm{nil}(\varphi)}}^{\ast}$ which have at least $k-1$ successor generations, so we can assume that $v$ is transient.

Fix any $w$ in the $n$-th successor generation of $v$ which does not appear in any earlier generation (in other words, $\mathrm{ht}(w)=\mathrm{ht}(v)+n$). First, we claim that any element in one of the $n$ successor generations of $v$ (including the element $v$ itself) has a unique representation of one of the forms $\varphi^k(w)\cdot x$ with $k\in\{0,\ldots,n\}$ and $x\in\mathrm{ker}^{(n-k)}(\varphi)$.

To see this, first take an element $g$ in one of the successor generations, say $\mathrm{ht}(g)=\mathrm{ht}(v)+n-k$ with $k\in\{0,\ldots,n\}$. It then follows that $\varphi^{n-k}(\varphi^k(w^{-1})g)=v^{-1}\cdot v=1_G$, so that indeed, $g$ can be written as $\varphi^k(w)\cdot x$ with $x:=\varphi^k(w^{-1})g\in\mathrm{ker}^{(n-k)}(\varphi)$. But also, clearly $\mathrm{ht}(\varphi^k(w)\cdot x)=\mathrm{ht}(v)+n-k$ if $x\in\mathrm{ker}^{(n-k)}(\varphi)$ so that $\varphi^k(w)\cdot x=\varphi^l(w)\cdot y$ first implies $k=l$ and then $x=y$.

We are now ready to show that the procreation behaviors of $v$ and $1_G$ of length $n$ coincide. Fix $k\in\{0,\ldots,n-1\}$, and let $x$ be a child of $1_G$ in $\Gamma_{\varphi}^{\ast}$ which has $k$ successor generations (i.e., contributes to the entry $a_k$ in the procreation behavior $(a_1,\ldots,a_n)$ of $1_G$). Then either $x=1$ or there exists $y\in\mathrm{ker}^{(k+1)}(\varphi)\setminus\mathrm{ker}^{(k)}(\varphi)$ such that $\varphi^k(y)=x$, in which case we readily check that $\varphi^{n-k-1}(w)\cdot y$ is an element in the $k$-th preimage of $\varphi^{n-1}(w)\cdot x$ under $\varphi$; summing up, we have an injection $x\mapsto \varphi^{n-1}(w)\cdot x$ from the set of children of $1_G$ with $k$ successor generations into the set of children of $v$ with $k$ successor generations.

But this is even a bijection, for if $\varphi^{n-1}(w)\cdot x$ is a child of $v$ which has $k$ successor generations and we fix an element $\varphi^{n-k-1}(w)\cdot y$ in the $k$-th preimage of $\varphi^{n-1}(w)\cdot x$ under $\varphi$, then we see immediately that $y$ must be in the $k$-th preimage of $x$ under $\varphi$ so that $x$ is a child of $1_G$ with $k$ successor generations. This proves the theorem.
\end{proof}

\begin{remmark}\label{isoRem}
Note that the isomorphism type of a state space of an endomorphism $\varphi$ of a finite group $G$ is completely determined by the procreation behavior of $1_G$ in $\Gamma_{\varphi}^{\ast}$ together with the orders $|\mathrm{per}_n(\varphi)|$ of the subgroups of $G$ consisting of periodic points whose period divides $n$ for the various $n\in\mathbb{N}^+$, a fact that we will frequently use without further reference when counting isomorphism types of state spaces of finite cyclic groups in the next section.
\end{remmark}

The last theorem on the state space structure of FDGs which we want to present here gives further information on the behavior of the procreation numbers of the identity element:

\begin{theorem}\label{stateGraphTheo}
Let $(G,\varphi)$ be an FDG and let $(a_k)_{k\in\mathbb{N}^+}$ be the procreation behavior of $1_G$ in $\Gamma_{\varphi}^{\ast}$. Then for all $k\in\mathbb{N}$, $a_1\cdots a_k=|\mathrm{ker}^{(k)}(\varphi)|$ (in particular, $a_k=[\mathrm{ker}^{(k)}(\varphi):\mathrm{ker}^{(k-1)}(\varphi)]$), and for all $n,m\in\mathbb{N}^+$, $n\leq m$ implies $a_m\mid a_n$.
\end{theorem}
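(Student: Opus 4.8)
The plan is to first translate the combinatorial quantities $a_k$ into group-theoretic data and then read off both assertions from the first isomorphism theorem. Since an edge $x\to y$ of $\Gamma_{\varphi}^{\ast}$ means $\varphi(y)=x$, the children of $1_G$ in $\Gamma_{\varphi}^{\ast}$ are exactly the elements of $\mathrm{ker}(\varphi)=\mathrm{ker}^{(1)}(\varphi)$, and a child $c$ has at least $k-1$ successor generations precisely when there is a chain $c=\varphi(w_2)=\varphi^2(w_3)=\cdots=\varphi^{k-1}(w_k)$, i.e.\ when $c\in\mathrm{im}(\varphi^{k-1})$. Hence I would record the identity $a_k=|\mathrm{ker}(\varphi)\cap\mathrm{im}(\varphi^{k-1})|$ (with the self-loop at $1_G$ accounting for the membership $1_G\in\mathrm{im}(\varphi^{k-1})$ for every $k$).

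Next I would rewrite this intersection as the image of a kernel: checking both inclusions gives $\mathrm{ker}(\varphi)\cap\mathrm{im}(\varphi^{k-1})=\varphi^{k-1}(\mathrm{ker}^{(k)}(\varphi))$, since $\varphi^{k-1}(x)\in\mathrm{ker}(\varphi)$ is equivalent to $\varphi^k(x)=1_G$, i.e.\ to $x\in\mathrm{ker}^{(k)}(\varphi)$. Restricting $\varphi^{k-1}$ to $\mathrm{ker}^{(k)}(\varphi)$, its kernel is exactly $\mathrm{ker}^{(k-1)}(\varphi)$, so the first isomorphism theorem yields $a_k=|\varphi^{k-1}(\mathrm{ker}^{(k)}(\varphi))|=[\mathrm{ker}^{(k)}(\varphi):\mathrm{ker}^{(k-1)}(\varphi)]$. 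Because $\mathrm{ker}^{(0)}(\varphi)=\{1_G\}$, telescoping the indices $a_k=[\mathrm{ker}^{(k)}(\varphi):\mathrm{ker}^{(k-1)}(\varphi)]$ immediately gives $a_1\cdots a_k=|\mathrm{ker}^{(k)}(\varphi)|$; the parenthetical formula for $a_k$ has already been established along the way.

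For the divisibility claim I would reduce $n\le m\Rightarrow a_m\mid a_n$ to the consecutive case $a_k\mid a_{k-1}$ and then chain by transitivity of divisibility. The key point is that $\varphi$ carries $\mathrm{ker}^{(j)}(\varphi)$ into $\mathrm{ker}^{(j-1)}(\varphi)$, hence induces a group homomorphism $\overline{\varphi}\colon\mathrm{ker}^{(k)}(\varphi)/\mathrm{ker}^{(k-1)}(\varphi)\to\mathrm{ker}^{(k-1)}(\varphi)/\mathrm{ker}^{(k-2)}(\varphi)$; this is injective because $\varphi(x)\in\mathrm{ker}^{(k-2)}(\varphi)$ forces $x\in\mathrm{ker}^{(k-1)}(\varphi)$. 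Thus $\mathrm{ker}^{(k)}(\varphi)/\mathrm{ker}^{(k-1)}(\varphi)$ embeds as a subgroup of $\mathrm{ker}^{(k-1)}(\varphi)/\mathrm{ker}^{(k-2)}(\varphi)$, and Lagrange's theorem gives $a_k\mid a_{k-1}$ for every $k\ge 2$, so $a_m\mid a_{m-1}\mid\cdots\mid a_n$.

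The first step is essentially bookkeeping and the product formula is a clean application of the first isomorphism theorem, so I expect the only substantive point to be the divisibility: the right move there is to recognize $\varphi$ itself as the natural injection between successive kernel quotients, rather than attempting to compare the far-apart indices $a_n$ and $a_m$ directly.
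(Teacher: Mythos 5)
Your proof is correct, and it takes a genuinely different route from the paper's. The paper first proves the product formula $a_1\cdots a_k=|\mathrm{ker}^{(k)}(\varphi)|$ by a combinatorial count of path endpoints that leans on Theorem~\ref{rigidProcTheo} (rigid procreation); it then observes that restricting to the sub-FDG $(\mathrm{im}(\varphi),\varphi_{|\mathrm{im}(\varphi)})$ shifts the procreation sequence, so that $|\mathrm{ker}^{(k)}(\varphi)\cap\mathrm{im}(\varphi)|=a_2\cdots a_{k+1}$, and applies Lagrange's theorem to the inclusion $\mathrm{ker}^{(k)}(\varphi)\cap\mathrm{im}(\varphi)\leq\mathrm{ker}^{(k)}(\varphi)$ to get $a_{k+1}\mid a_1$, finally iterating over the successive images $\mathrm{im}(\varphi^j)$ to obtain the full divisibility chain. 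You instead bypass the rigid procreation theorem entirely: you identify $a_k=|\mathrm{ker}(\varphi)\cap\mathrm{im}(\varphi^{k-1})|=|\varphi^{k-1}(\mathrm{ker}^{(k)}(\varphi))|$ directly, obtain the index formula $a_k=[\mathrm{ker}^{(k)}(\varphi):\mathrm{ker}^{(k-1)}(\varphi)]$ from the first isomorphism theorem (so for you the parenthetical claim comes first and the product formula follows by telescoping, the reverse of the paper's order), and you prove divisibility via the injection $\mathrm{ker}^{(k)}(\varphi)/\mathrm{ker}^{(k-1)}(\varphi)\hookrightarrow\mathrm{ker}^{(k-1)}(\varphi)/\mathrm{ker}^{(k-2)}(\varphi)$ induced by $\varphi$, which gives the consecutive relation $a_k\mid a_{k-1}$ without any shifting trick. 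What your approach buys is self-containedness (Theorem~\ref{stateGraphTheo} no longer depends on Theorem~\ref{rigidProcTheo}) and a cleaner structural reading of the $a_k$ as the factors of the iterated-kernel filtration; what the paper's approach buys is that it stays at the level of the state space, reusing machinery already established and making visible the fact, useful elsewhere, that passage to $\mathrm{im}(\varphi)$ shifts the procreation sequence. One small point of care in your first step: the definition of successor generations uses directed paths, which for this statement to hold (e.g.\ for the child $1_G$ of itself) must be read as walks with repetitions allowed, as the paper implicitly does; your parenthetical remark about the self-loop shows you adopted the correct reading, and for every child $c\neq 1_G$ of $1_G$ the two readings agree anyway, since a repeated vertex on a backward walk from $c$ would force $c$ to be periodic, hence $c=1_G$.
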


\begin{proof}
The divisibity result is obtained by an application of Lagrange's theorem after some counting which will yield the first assertion as a \enquote{by-product}.

First, observe that in any finite digraph with rigid procreation, the number of endpoints of paths with length $r$ starting from some vertex $v$ with at least $r$ successor generations and procreation behavior of length $r$ equal to $(a_1,\ldots,a_r)$ is precisely $a_1\cdots a_r$, since by induction on $r$, the $a_r$ children of $v$ that have enough successor generations to contribute to this number each give $a_1\cdots a_{r-1}$ endpoints.

Applying this to the vertex $1_G$ in $\Gamma_{\varphi}^{\ast}$ yields $|\mathrm{ker}^{(k)}(\varphi)|=a_1\cdots a_k$. Now note that the $n$-th procreation number of $1_G$ in the dual of the state space of the FDG $(\mathrm{im}(\varphi),\varphi_{|\mathrm{im}(\varphi)})$ is the number of children of $1_G$ which have at least $k$ successor generations in $\Gamma_{\varphi}^{\ast}$, so the corresponding procreation behavior is given by the sequence $(a_{n+1})_{n\in\mathbb{N}^+}$, and we obtain $|\mathrm{ker}^{(k)}(\varphi)\cap\mathrm{im}(\varphi)|=a_2\cdots a_{k+1}$.

By Lagrange's Theorem, we now get $a_2\cdots a_{k+1}\mid a_1\cdots a_k$, that is, $a_{k+1}\mid a_1$ for all $k\in\mathbb{N}$, and thus the general result by passing to the procreation behaviors of $1_G$ in the state spaces successive images of $\varphi$ with the corresponding restriction of $\varphi$.
\end{proof}

Actually, this is the strongest result on the structure of the nilpotent part which we can derive in general, as the following proposition shows.

\begin{propposition}\label{strongestProp}
For any finite $1$-tree $\Gamma$ whose cycle is a loop and which has rigid procreation such that the procreation behavior of the one vertex on the loop is $(a_k)_{k\in\mathbb{N}^+}$ with $a_m\mid a_n$ for all $n,m\in\mathbb{N}^+$ with $n\geq m$, there exists an FDG $(G,\varphi)$ such that $G$ is abelian and $\Gamma_{\varphi}^{\ast}\cong\Gamma$.
\end{propposition}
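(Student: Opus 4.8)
The plan is to build $(G,\varphi)$ explicitly as a direct sum of nilpotent \emph{shift blocks} tuned to the prescribed sequence, and then to use the rigidity results already at hand to turn an equality of procreation numbers into a genuine isomorphism of digraphs.

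First I would extract two consequences of the hypotheses. Because $\Gamma$ is finite, its loop vertex has finitely many children and the tree hanging off the loop has finite depth, so $a_k=1$ for all sufficiently large $k$; and the divisibility hypothesis makes each quotient $q_k:=a_k/a_{k+1}$ a well-defined positive integer, equal to $1$ for all large $k$. I would also record the uniqueness principle that makes the conclusion meaningful: a finite $1$-tree whose cycle is a loop and which has rigid procreation is determined up to isomorphism by the procreation behavior of its loop vertex. Indeed, rigid procreation forces every vertex with exactly $n$ successor generations to have procreation behavior $(a_1,\dots,a_n)$ (the loop vertex, having infinitely many successor generations, serves as the common reference), and an induction on $n$ shows that the rooted subtree below a vertex is then determined by its number of successor generations; this is the nilpotent case of Remark \ref{isoRem}, where the periodic part is trivial.

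For the construction, for each $k$ with $q_k>1$ let $S_k$ be the shift endomorphism of $(\mathbb{Z}/q_k\mathbb{Z})^k$ given by $(x_1,\dots,x_k)\mapsto(x_2,\dots,x_k,0)$, and set $G:=\bigoplus_k(\mathbb{Z}/q_k\mathbb{Z})^k$ and $\varphi:=\bigoplus_k S_k$. Then $G$ is finite abelian and $\varphi$ is nilpotent, so $\mathrm{nil}(\varphi)=G$ and, by Theorem \ref{composTheo}(4), $\Gamma_{\varphi}^{\ast}$ is a finite $1$-tree whose cycle is the loop at $1_G$. I would then compute $\mathrm{ker}^{(m)}(\varphi)=\bigoplus_k(\mathbb{Z}/q_k\mathbb{Z})^{\min(m,k)}$, so that $|\mathrm{ker}^{(m)}(\varphi)|=\prod_k q_k^{\min(m,k)}$; by Theorem \ref{stateGraphTheo} the $m$-th procreation number of $1_G$ is $[\mathrm{ker}^{(m)}(\varphi):\mathrm{ker}^{(m-1)}(\varphi)]=\prod_{k\ge m}q_k$, and the telescoping identity $\prod_{k\ge m}q_k=\prod_{k\ge m}a_k/a_{k+1}=a_m$ (using $a_k\to1$) shows that the procreation behavior of $1_G$ in $\Gamma_{\varphi}^{\ast}$ is exactly $(a_k)_{k\in\mathbb{N}^+}$.

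To finish, since $\Gamma_{\varphi}^{\ast}$ has rigid procreation by Theorem \ref{rigidProcTheo} and its loop vertex has the same procreation behavior $(a_k)$ as the loop vertex of $\Gamma$, the uniqueness principle from the first step gives $\Gamma_{\varphi}^{\ast}\cong\Gamma$. The construction itself is routine; the main point requiring care is the uniqueness principle --- that rigid procreation together with the loop-vertex procreation behavior pins down the entire tree up to isomorphism --- together with the bookkeeping of the kernel orders and the telescoping that matches the procreation numbers exactly.
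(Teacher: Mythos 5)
Your proof is correct, and it follows the same overall strategy as the paper: construct an explicit finite abelian group with a nilpotent endomorphism whose iterated kernels have orders $a_1\cdots a_m$, then invoke Theorem \ref{stateGraphTheo} together with rigid procreation to conclude that the resulting state space realizes $\Gamma$. The differences are worth noting, though. The paper realizes the kernel filtration on a single \enquote{weighted chain}: it takes $G=\prod_{i=1}^n\mathbb{Z}/a_i\mathbb{Z}$ with $\varphi(x_1)=1$ and $\varphi(x_{i+1})=x_i^{a_i/a_{i+1}}$, whereas you take a direct sum of homogeneous shift blocks $(\mathbb{Z}/q_k\mathbb{Z})^k$ with $q_k=a_k/a_{k+1}$; these two groups are in general not even isomorphic (for $(a_1,a_2)=(4,2)$ the paper's group is $\mathbb{Z}/4\mathbb{Z}\times\mathbb{Z}/2\mathbb{Z}$ while yours is $(\mathbb{Z}/2\mathbb{Z})^3$), but both have the prescribed kernel orders, which is all that matters. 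The more substantive difference is that you state and sketch a proof of the \emph{uniqueness principle} --- that a finite $1$-tree whose cycle is a loop and which has rigid procreation is determined up to isomorphism by the procreation behavior of its loop vertex --- which is genuinely needed here: the target $\Gamma$ is an abstract digraph, not a priori a state space, so one cannot literally cite Remark \ref{isoRem}, and the paper compresses this step into the phrase \enquote{which is sufficient by Theorem \ref{stateGraphTheo}}. Making that inductive argument explicit (subtrees are determined by their number of successor generations, with child multiplicities $a_{e+1}-a_{e+2}$) closes a small expository gap in the paper's own proof, so your version is, if anything, the more complete one.
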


\begin{proof}
Consider the finite abelian group $G:=\prod\limits_{i=1}^n{\mathbb{Z}/a_i\mathbb{Z}}=\langle x_1,\ldots,x_n\mid x_ix_j=x_jx_i\hspace{3pt}(i\not=j),x_i^{a_i}=1\hspace{3pt}(i=1,\ldots,n)\rangle$, where $n$ is so large that $a_{n+1}=1$. We specify a nilpotent endomorphism $\varphi$ of $G$ such that the $k$-th kernel of $\varphi$ is the subgroup generated by $x_1,\ldots,x_k$, which is sufficient by Theorem \ref{stateGraphTheo}. $\varphi$ can be defined by specification on the generators $x_i$. We set $\varphi(x_1):=1$ and $\varphi(x_{i+1}):=x_i^{a_i/a_{i+1}}$ for $i=1,\ldots,n-1$. This preserves the orders of generators $x_i$ with $i>1$ and hence defines an endomorphism of $G$. It is clear that any of $x_1,\ldots,x_k$ is mapped to $1_G$ after $k$ applications of $\varphi$, while the other generators \enquote{survive} $k$ applications of $\varphi$.
\end{proof}

\section{An application to finite cyclic groups}

Let us now consider the finite cyclic group $\mathbb{Z}/n\mathbb{Z}$. Any endomorphism of this group is a \enquote{stretch modulo $n$} by a factor $a\in\{0,\ldots,n-1\}$; we denote the corresponding stretch function by $\lambda_a$. FDSs arising from such maps $\lambda_a$ play an important role in pseudorandom number generation (key word: multiplicative congruential generators), and several papers have already been dedicated to the study of their state spaces: Ahmad \cite{Ahm69a} in 1969 investigated the cycle structure of automorphisms of finite cyclic groups. In 2008, Hern{\'a}ndez-Toledo \cite{Her08a} used the structure of the group of units modulo odd prime powers to describe the structure of state spaces of endomorphisms of $\mathbb{Z}/p^k\mathbb{Z}$ for odd primes $p$ as explicitly as possible. He did not treat the case of primary cyclic groups of even order or the general case, though. Sha in his already mentioned paper \cite{Sha12a} investigated state spaces of endomorphisms of general finite cyclic groups, describing, among other things, their graph automorphism groups. Finally, Deng in \cite{Den13a} more generally extensively studied the state spaces arising from affine maps of finite cyclic groups and gave a necessary and sufficient criterion of number-theoretic nature when two such graphs are isomorphic. However, to the author's best knowledge, so far there exists no published explicit formula for the number of isomorphism types of state spaces of endomorphisms of $\mathbb{Z}/n\mathbb{Z}$, which we will now derive as an application of the abstract theory developed in the previous section. To this end, we will extend Hern{\'a}ndez-Toledo's idea of using the structure of the group of units to primary cyclic groups of even order, and the group-theoretic Lemma \ref{coprimeProdLem} will allow us to easily extend our counting formulas from primary cyclic groups to the general case.

To make our text self-comprehensive and since our proof for primary cyclic groups of even order is similar to the one we give for the odd order case, we will prove both cases here. Let us start with the odd order case (note that since $\mathbb{Z}/p^n\mathbb{Z}$ does not decompose as a semidirect product in a nontrivial way, any endomorphism of it is either nilpotent or an automorphism):

\begin{lemma}\label{oddPrimeLem}
Let $p$ be an odd prime, $k\in\mathbb{N}$. Then the number of isomorphism types of state spaces of endomorphisms of $\mathbb{Z}/p^k\mathbb{Z}$ equals $k\cdot(\tau(p-1)+1)$, where $\tau:\mathbb{N}^+\rightarrow\mathbb{N}^+$ denotes the divisor number function. Of these, $k$ correspond to nilpotent endomorphisms and $k\cdot\tau(p-1)$ to automorphisms.
\end{lemma}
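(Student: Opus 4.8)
The plan is to separate the two types of endomorphisms, since (as the paper notes) every endomorphism of $\mathbb{Z}/p^k\mathbb{Z}$ is either nilpotent or an automorphism. First I would dispose of the nilpotent case: an endomorphism $\lambda_a$ is nilpotent precisely when $a$ is divisible by $p$, and then $\lambda_a$ acts on the cyclic group by collapsing successive layers. Because $\mathbb{Z}/p^k\mathbb{Z}$ is cyclic, each kernel $\mathrm{ker}^{(m)}(\lambda_a)$ is the unique subgroup of its order, so the entire state space is pinned down by the sequence of kernel orders, equivalently by the procreation behavior of $1$ in $\Gamma^{\ast}_{\lambda_a}$ via Theorem~\ref{stateGraphTheo}. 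I expect the isomorphism type of a nilpotent state space to be governed solely by the \emph{nilpotency length}, i.e.\ the least $m$ with $\mathrm{ker}^{(m)}(\lambda_a)=\mathbb{Z}/p^k\mathbb{Z}$, which ranges over $\{1,\ldots,k\}$ (the value $0$ being excluded since $\lambda_a$ nilpotent forces $a\neq1$, hence a nontrivial kernel is reached only after at least one step, and the whole group is exhausted in at most $k$ steps). This should yield exactly $k$ isomorphism types from nilpotent endomorphisms.

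\textbf{The automorphism case via the unit group.} For the automorphisms, $\lambda_a$ is an automorphism iff $a\in(\mathbb{Z}/p^k\mathbb{Z})^{\times}$, and by Remark~\ref{isoRem} the state space is a disjoint union of cycles whose multiset of lengths is the cycle type of multiplication-by-$a$ acting on $\mathbb{Z}/p^k\mathbb{Z}$. Two automorphisms give isomorphic state spaces exactly when they have the same cycle type. Following Hern\'andez-Toledo's idea, I would exploit the structure of the unit group: for odd $p$, $(\mathbb{Z}/p^k\mathbb{Z})^{\times}$ is cyclic of order $p^{k-1}(p-1)$. The orbit of $a$ on a point of additive order $p^j$ has length equal to the multiplicative order of $a$ modulo $p^j$, so the cycle type of $\lambda_a$ is determined by the tuple of orders $(\mathrm{ord}_{p}(a),\mathrm{ord}_{p^2}(a),\ldots,\mathrm{ord}_{p^k}(a))$. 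The heart of the counting is therefore to determine how many distinct such tuples arise.

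\textbf{Reducing to a product count.} Using the cyclic structure of the unit group and the standard filtration $(\mathbb{Z}/p^k\mathbb{Z})^{\times}\cong C_{p-1}\times C_{p^{k-1}}$, I expect the order of $a$ modulo $p^j$ to factor as a "semisimple" part dividing $p-1$ (the projection to $C_{p-1}$, which is the same for every $j\ge1$) times a $p$-power part that grows in a controlled, rigid way as $j$ increases. The semisimple part can be any divisor $d\mid p-1$, contributing the factor $\tau(p-1)$. I would argue that once $d$ is fixed, the sequence of $p$-power parts $\mathrm{ord}_{p^j}(a)_p$ is essentially forced by the first index at which $a$ becomes a nontrivial unit modulo higher powers, producing exactly $k$ distinct cycle-type outcomes as this "ramification index" ranges over $\{1,\ldots,k\}$. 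Multiplying, this gives $k\cdot\tau(p-1)$ isomorphism types for automorphisms, and adding the $k$ nilpotent types yields $k(\tau(p-1)+1)$.

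\textbf{The main obstacle.} The delicate step is the last one: showing that distinct values of the divisor $d\mid p-1$ and the $p$-power growth index genuinely produce \emph{non-isomorphic} state spaces (injectivity of the parametrization) while also producing \emph{all} of them (surjectivity), rather than merely bounding the count. This requires pinning down precisely how the multiplicative order modulo $p^j$ determines the full cycle-length multiset, including the multiplicities of each cycle length coming from points of each additive order $p^j$, and verifying that two different $(d,\text{index})$ pairs cannot coincidentally yield the same multiset of cycle lengths. I would handle this by writing the cycle type explicitly as a function of the order tuple and checking that the map from pairs to cycle types is a bijection onto its image, using the coprimality of $d$ and the $p$-power part to prevent collisions. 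Everything else is routine orbit-counting under a cyclic group action.
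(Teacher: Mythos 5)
Your automorphism count is essentially the paper's own argument: use a primitive root modulo $p^k$, split the multiplicative order of $a$ into a divisor $d$ of $p-1$ and a $p$-power part governed by an index $l\in\{0,\ldots,k-1\}$, and check that the resulting cycle-length multisets are pairwise distinct; the injectivity/surjectivity point you flag as \enquote{the main obstacle} is exactly what the paper settles by writing out the cycle structure explicitly, and it goes through. The genuine gap is in your nilpotent case. The nilpotency length $\min\{m:\mathrm{ker}^{(m)}(\lambda_a)=\mathbb{Z}/p^k\mathbb{Z}\}$ is \emph{not} a complete invariant of the state space, and it does not range over $\{1,\ldots,k\}$. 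If $a=up^l$ with $p\nmid u$ and $1\le l\le k$, the nilpotency length of $\lambda_a$ is $\lceil k/l\rceil$, which is neither injective in $l$ nor surjective onto $\{1,\ldots,k\}$. Concretely, take $k=4$: then $\lambda_{p^2}$ and $\lambda_{p^3}$ both have nilpotency length $2$, but their state spaces are non-isomorphic, since $|\mathrm{ker}^{(1)}(\lambda_{p^2})|=p^2$ whereas $|\mathrm{ker}^{(1)}(\lambda_{p^3})|=p^3$ (the identity has $p^2$ children in one dual state space and $p^3$ in the other); moreover no nilpotent endomorphism of $\mathbb{Z}/p^4\mathbb{Z}$ has nilpotency length $3$, as $\lceil 4/l\rceil$ only takes the values $4,2,1$. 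So counting by nilpotency length yields three types for $k=4$, not four; in general it undercounts, giving roughly $2\sqrt{k}$ values. Your final answer $k$ is correct, but the proposed route to it fails.

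The repair is the paper's parametrization: the invariant is not the nilpotency length but the whole sequence of iterated kernel orders $|\mathrm{ker}^{(m)}(\lambda_a)|=p^{\min\{lm,k\}}$, $m\in\mathbb{N}$ (equivalently, by Theorem \ref{stateGraphTheo}, the procreation behavior of $0$, which for $\lambda_{p^l}$ reads $(p^l,\ldots,p^l,p^r,1,\ldots)$ with $q$ entries equal to $p^l$ when $k=ql+r$, $0\le r<l$). This sequence determines $l=\min\{\nu_p(a),k\}$ --- already its first term does --- and conversely one checks, by counting solutions of the congruence $up^l\cdot x\equiv c\ (\mathrm{mod}\ p^k)$, that all nilpotent $\lambda_a$ with the same value of $l$ have isomorphic state spaces. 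Hence the nilpotent isomorphism types are in bijection with $l\in\{1,\ldots,k\}$, giving exactly $k$ of them, and the rest of your count then stands.
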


\begin{proof}
Because of $\lambda_a^m=\lambda_{a^m\hspace{3pt}(\mathrm{mod}\hspace{3pt}p^k)}$, it is easy to see that $\lambda_a:\mathbb{Z}/p^k\mathbb{Z}\rightarrow\mathbb{Z}/p^k\mathbb{Z}$ is nilpotent if and only if $p\mid a$. Let $v_p^{(k)}(a):=\mathrm{min}\{\nu_p(a),k\}$ denote the \textit{$p$-adic valuation of $a$ modulo $p^k$}; here, $\nu_p(a)$ denotes the usual $p$-adic valuation of $a$, defined as the exponent of the greatest power of $p$ dividing $a$, which is understood to be $\infty$ if $a=0$. If $u\in\{0,\ldots,p-1\}$ with $p\nmid u$, then for all $c\in\{0,\ldots,p-1\}$ and $l\in\{1,\ldots,n\}$, the congruence $up^l\cdot x\equiv c\hspace{3pt}(\mathrm{mod}\hspace{3pt}p^k)$ has the same number of solutions modulo $p^k$ as $p^lx\equiv c\hspace{3pt}(\mathrm{mod}\hspace{3pt}p^k)$ (namely $p^l$ if $v_p^{(k)}(c)\geq l$ and $0$ else) so that the procreation behaviors of the identity element $0$ under $\lambda_{p^l}$ and $\lambda_{up^l}$ are the same and hence their state spaces are isomorphic. So for counting the isomorphism types in the nilpotent case, we only need to consider the endomorphisms $\lambda_{p^l}$ for $l=1,\ldots,n$. But again, by the observations on the solvability modulo $p^k$ of the congruence $p^l\cdot x\equiv c\hspace{3pt}(\mathrm{mod}\hspace{3pt}p^k)$ from above and Theorem \ref{stateGraphTheo}, it is easy to see that the following holds for the procreation behavior in this case: Write $k=q\cdot l+r$ with $q,r\in\mathbb{N}$ and $0\leq r<l$. Then the procreation behavior of the identity under $\lambda_{p^l}$ is $(p^l,p^l,\ldots,p^l,p^r,1,\ldots)$, where the first $q$ procreation numbers are equal to $p^l$. Hence these $k$ nilpotent endomorphisms indeed yield pairwise non-isomorphic state spaces, and we are done in the nilpotent case.

It remains to treat the case $p\nmid a$, where $\lambda_a$ is an automorphism. This is basically the same argumentation as the one of Hern{\'a}ndez-Toledo. We make use of the fact that there is a primitive root $g$ modulo $p^k$, and write $a=g^{u\cdot s\cdot p^l}$, where the numbers $u,t\in\{1,\ldots,p^k-1\}$ with $\mathrm{gcd}(u,p(p-1))=1$, $s$ is a product of powers of the prime divisors of $p-1=p_1^{e_1}\cdots p_r^{e_r}$ (all $e_i\geq 1$), and $l\in\{0,\ldots,k-1\}$. Since the multiplicative order of $g$ modulo $p^m$, $m\in\{1,\ldots,k\}$, is $\phi(p^m)=p^{m-1}(p-1)$, by a basic result of group theory (or elementary number theory), the multiplicative order of $a$ modulo $p^m$ is $$p_1^{e_1-v_{p_1}^{(e_1)}(s)}\cdots p_r^{e_r-v_{p_r}^{(e_r)}(s)}p^{\mathrm{max}\{0,m-1-l\}}.$$ This means that the cycle of any generator of $\mathbb{Z}/p^m\mathbb{Z}$ under the stretch by $a$ has this length, and hence so does the cycle of any element of $\mathbb{Z}/p^k\mathbb{Z}$ of order $p^m$, as $\lambda_a$ on $\mathbb{Z}/p^k\mathbb{Z}$ restricts to an automorphism of the subgroup generated by this element, defining a dynamical structure isomorphic to the one of the corresponding stretch on $\mathbb{Z}/p^m\mathbb{Z}$. We can therefore describe the cycle structure of $\lambda_a$ on $\mathbb{Z}/p^k\mathbb{Z}$ as follows:

It has, in addition to the one trivial fixed point, $p^{l+1}-1$ points (namely the nontrivial elements of the unique subgroup of order $p^{l+1}$) lying on cycles of length $$p_1^{e_1-v_{p_1}^{(e_1)}(s)}\cdots p_r^{e_r-v_{p_r}^{(e_r)}(s)},$$ that is, $$\frac{p^{l+1}-1}{p_1^{e_1-v_{p_1}^{(e_1)}(s)}\cdots p_r^{e_r-v_{p_r}^{(e_r)}(s)}}$$ cycles of that length. Furthermore, for each $j\in\{k+2,\ldots,n\}$, it has $p^j-p^{j-1}$ points (the elements from the complement of the subgroup with $p^{j-1}$ elements in the subgroup with $p^j$ elements) on cycles of length $$p_1^{e_1-v_{p_1}^{(e_1)}(s)}\cdots p_r^{e_r-v_{p_r}^{(e_r)}(s)}p^{j-l-1},$$ i.e., $$p^l\cdot\frac{p-1}{p_1^{e_1-v_{p_1}^{(e_1)}(s)}\cdots p_r^{e_r-v_{p_r}^{(e_r)}(s)}}$$ cycles of that length. From this, we obtain a bijective correspondence between the isomorphism types of state spaces of automorphisms of $\mathbb{Z}/p^k\mathbb{Z}$ and the cartesian product of the set of positive divisors of $p-1$ with the set $\{0,\ldots,k-1\}$, whence there are $k\cdot\tau(p-1)$ such isomorphism types, as we wanted to show.
\end{proof}

The case $p=2$ goes as follows:

\begin{lemma}\label{evenPrimeLem}
Let $k\in\mathbb{N},k\geq 3$. Then the number of isomorphism types of state spaces of endomorphisms of $\mathbb{Z}/2^k\mathbb{Z}$ equals $3k-3$, of which $k$ stem from nilpotent endomorphisms and $2k-3$ from automorphisms. Also, $\mathbb{Z}/2\mathbb{Z}$ has $2$ such isomorphism types, one nilpotent, one periodic, and $\mathbb{Z}/4\mathbb{Z}$ has $4$ isomorphism types, two nilpotent, two periodic.
\end{lemma}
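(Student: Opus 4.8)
The plan is to follow the template of the proof of Lemma~\ref{oddPrimeLem}, separating the nilpotent endomorphisms from the automorphisms; the former can be handled essentially as before, while the latter requires the (non-cyclic) structure of $(\mathbb{Z}/2^k\mathbb{Z})^\times$ as genuinely new input.

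For the nilpotent case, $\lambda_a$ is nilpotent iff $2\mid a$. Writing $a=2^lu$ with $u$ odd, the unit $u$ does not affect how many $x$ solve $2^lu\cdot x\equiv c\pmod{2^k}$, so $\lambda_{2^lu}$ and $\lambda_{2^l}$ have identical procreation behavior at $0$ and hence isomorphic state spaces; thus only $\lambda_{2^l}$, $l=1,\dots,k$, need be considered. Writing $k=ql+r$ with $0\le r<l$, Theorem~\ref{stateGraphTheo} gives the procreation behavior $(2^l,\dots,2^l,2^r,1,\dots)$ (with $q$ leading entries $2^l$), which is distinct for distinct $l$, producing exactly $k$ nilpotent isomorphism types.

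The automorphism case ($2\nmid a$) is where $p=2$ genuinely differs: for $k\ge 3$ the group $(\mathbb{Z}/2^k\mathbb{Z})^\times$ is not cyclic but equals $\langle-1\rangle\times\langle 5\rangle\cong\mathbb{Z}/2\mathbb{Z}\times\mathbb{Z}/2^{k-2}\mathbb{Z}$. I would write each odd $a$ as $a\equiv(-1)^\epsilon 5^t\pmod{2^k}$ and compute, for $j=1,\dots,k$, the multiplicative order $d_j$ of $a$ modulo $2^j$; since an element of additive order $2^j$ has $\lambda_a$-period exactly $d_j$, Remark~\ref{isoRem} tells us that the isomorphism type of $\Gamma_{\lambda_a}$ is completely determined by the sequence $(d_j)_{j=1}^k$. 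Using that $5$ and $-1$ have independent orders $2^{j-2}$ and $2$ in $(\mathbb{Z}/2^j\mathbb{Z})^\times$ for $j\ge 3$, taking an lcm gives $d_j=2^{\max(0,\,j-2-s)}$ when $\epsilon=0$ and $d_j=2^{\max(1,\,j-2-s)}$ when $\epsilon=1$, where $s:=\nu_2(t)$; in particular the sequence depends only on the pair $(\epsilon,s)$.

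It remains to count the resulting sequences. For $\epsilon=0$ the doubling of $(d_j)_j$ begins at index $s+3$, giving $k-2$ sequences as $s$ runs through $\{0,\dots,k-3\}$, plus the constant sequence $1$ from $a=1$, for $k-1$ in total; for $\epsilon=1$ the doubling begins at index $s+4$, giving $k-3$ sequences for $s\in\{0,\dots,k-4\}$ together with a single constant-$2$ sequence. The two families are disjoint since $d_2=1$ exactly when $\epsilon=0$, so this yields $(k-1)+(k-2)=2k-3$ automorphism types and hence $k+(2k-3)=3k-3$ types in all for $k\ge 3$. The two remaining values $k=1,2$, where $(\mathbb{Z}/2^k\mathbb{Z})^\times$ is cyclic and the $\langle-1\rangle\times\langle 5\rangle$ analysis degenerates, are disposed of by directly listing the at most four endomorphisms. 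I expect the main obstacle to be the bookkeeping in this final count: the constant-$2$ sequence for $\epsilon=1$ arises both from the inversion $a\equiv-1$ and from the index $s=k-3$, and these must be identified rather than counted twice — it is precisely this collapse that turns the naive tally of $2k-2$ into $2k-3$.
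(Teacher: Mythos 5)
Your proof is correct and takes essentially the same route as the paper's: the nilpotent count is imported verbatim from the odd-prime argument, and the automorphism count uses the decomposition $(\mathbb{Z}/2^k\mathbb{Z})^{\times}=\langle -1\rangle\times\langle 5\rangle$, the multiplicative orders of $a$ modulo $2^j$, and the same single collapse (the constant-$2$ order sequence arising both from $a\equiv -1$ and from $s=k-3$), yielding $2k-3$ automorphism types and $3k-3$ in total. One small correction: for $\epsilon=1$ your formula asserts $d_1=2$, but every odd $a$ is $\equiv 1\pmod{2}$, so $d_1=1$ always (the unique element of additive order $2$ is fixed by every automorphism, as the paper notes); since this slip is uniform over the $\epsilon=1$ family and your disjointness of the two families is tested via $d_2$, it does not affect the count.
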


\begin{proof}
This is actually very similar in spirit to the proof of Lemma \ref{oddPrimeLem} which we just gave; the situation is only a bit different because the structure of $(\mathbb{Z}/2^k\mathbb{Z})^{\ast}$ is more complicated compared to the one of $(\mathbb{Z}/p^k\mathbb{Z})^{\ast}$ for odd $p$. However, we still have everything under control. First of all, let us note that as for the nilpotent case, the same argument as for odd $p$ works, so we do not need to discuss it.

As for the periodic case, the cases $k=1,2$ are readily checked separately, and for $k\geq 3$, it is a well-known result of elementary number theory that the group of units $(\mathbb{Z}/2^k\mathbb{Z})^{\ast}$ is not cyclic, but decomposes as a direct product of two cyclic subgroups, one of order $2$ and generated by $-1$, the other of order $2^{k-2}$, generated by $5$. So write $a=(-1)^{\epsilon}5^{u\cdot 2^l}$ in $\mathbb{Z}/2^k\mathbb{Z}$ with $\epsilon\in\{0,1\}$, $u\in\{1,\ldots,2^{k-2}-1\}$ odd and $l\in\{0,\ldots,k-2\}$. Apparently, the multiplicative order of $a$ modulo $2^m$ with $m\geq 2$ then is $2^{\mathrm{max}\{\epsilon,m-2-l\}}$, and modulo $2$ it is just $1$. So we have two certain fixed points in this case (the identity and the uniquely determined element of additive order $2$), and additionally, for all $m\in\{2,\ldots,k\}$, we have $\frac{2^{m-1}}{2^{\mathrm{max}\{\epsilon,m-2-l\}}}$ cycles of length $2^{\mathrm{max}\{\epsilon,m-2-l\}}$.

Hence different values for $\epsilon$ give non-isomorphic state spaces (because there will be more than two fixed points if and only if $\epsilon=0$), but also clearly, for a fixed value of $\epsilon$ and varying values of $l\in\{0,\ldots,k-2\}$, we also get pairwise non-isomorphic state spaces, \textit{except} for $\epsilon=1$ and $l=k-3,k-2$ (which yield isomorpic state spaces), whereas different choices for $u$ never have any influence on the isomorphism type. Hence in this case, there are $2\cdot (k-1)-1=2k-3$ isomorphism types of automorphism state spaces.
\end{proof}

Now, for counting isomorphism types of state spaces of endomorphisms, it is not difficult to generalize from the primary cyclic case to arbitrary finite cyclic groups by using the following observation:

\begin{lemma}\label{coprimeProdLem}
Let $(G_1,\psi_1), (G_1,\psi_1'), (G_2,\psi_2)$ and $(G_2,\psi_2')$ be FDGs such that $\mathrm{gcd}(|G_1|,|G_2|)=1$. Then if $\Gamma_{\psi_1\times\psi_2}\cong\Gamma_{\psi_1'\times\psi_2'}$, then $\Gamma_{\psi_1}\cong\Gamma_{\psi_1'}$ and $\Gamma_{\psi_2}\cong\Gamma_{\psi_2'}$.
\end{lemma}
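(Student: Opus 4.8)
The plan is to reduce the isomorphism question to the two numerical invariants singled out in Remark~\ref{isoRem} --- the procreation behavior of the identity in the dual state space and the orders $|\mathrm{per}_n|$ of the subgroups of points whose period divides $n$ --- and then to exploit the coprimality hypothesis to show that these invariants split uniquely across a direct product. Since by Remark~\ref{isoRem} two state spaces of FDGs are isomorphic precisely when both families of invariants agree, it suffices to prove that the invariants of $\psi_1$ (resp.\ $\psi_2$) are recoverable from those of the product $\psi_1\times\psi_2$, and symmetrically for the primed maps.

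First I would record the multiplicativity of both invariants under direct products. As $(\psi_1\times\psi_2)^m=\psi_1^m\times\psi_2^m$, we get $\mathrm{ker}^{(m)}(\psi_1\times\psi_2)=\mathrm{ker}^{(m)}(\psi_1)\times\mathrm{ker}^{(m)}(\psi_2)$ for every $m$; and since a periodic point $(g_1,g_2)$ has period equal to the least common multiple of the periods of $g_1$ and $g_2$, we get $\mathrm{per}_n(\psi_1\times\psi_2)=\mathrm{per}_n(\psi_1)\times\mathrm{per}_n(\psi_2)$ for every $n$. Taking cardinalities and writing $(a_k),(b_k),(c_k)$ for the procreation behaviors of $\psi_1,\psi_2,\psi_1\times\psi_2$, Theorem~\ref{stateGraphTheo} gives $c_1\cdots c_k=|\mathrm{ker}^{(k)}(\psi_1)|\,|\mathrm{ker}^{(k)}(\psi_2)|=(a_1\cdots a_k)(b_1\cdots b_k)$, hence $c_k=a_kb_k$ for all $k$, while $|\mathrm{per}_n(\psi_1\times\psi_2)|=|\mathrm{per}_n(\psi_1)|\,|\mathrm{per}_n(\psi_2)|$.

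The decisive point is that each factor invariant lives in its own set of primes: $a_k=[\mathrm{ker}^{(k)}(\psi_1):\mathrm{ker}^{(k-1)}(\psi_1)]$ and $|\mathrm{per}_n(\psi_1)|$ both divide $|G_1|$, whereas $b_k$ and $|\mathrm{per}_n(\psi_2)|$ divide $|G_2|$. Writing $\pi_i$ for the set of primes dividing $|G_i|$, the hypothesis $\mathrm{gcd}(|G_1|,|G_2|)=1$ gives $\pi_1\cap\pi_2=\emptyset$, so $a_k$ is exactly the $\pi_1$-part of the product $c_k=a_kb_k$ (the largest divisor of $c_k$ coprime to $|G_2|$) and $b_k$ its $\pi_2$-part, and likewise for the periodic orders. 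Thus all invariants of $\psi_1$ and of $\psi_2$ are uniquely determined by those of $\psi_1\times\psi_2$.

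It remains to note that the invariants of the product are themselves read off from the graph $\Gamma_{\psi_1\times\psi_2}$ alone: by Theorem~\ref{rigidProcTheo} the dual has rigid procreation, so the procreation behavior of the identity coincides with that of any vertex lying on a cycle and is a graph invariant, while $|\mathrm{per}_n|$ is just the number of vertices on cycles of length dividing $n$. Consequently $\Gamma_{\psi_1\times\psi_2}\cong\Gamma_{\psi_1'\times\psi_2'}$ forces $c_k=c_k'$ and $|\mathrm{per}_n(\psi_1\times\psi_2)|=|\mathrm{per}_n(\psi_1'\times\psi_2')|$ for all $k,n$, and the coprime factorization above then yields $a_k=a_k'$, $b_k=b_k'$ together with the matching periodic orders, so Remark~\ref{isoRem} delivers $\Gamma_{\psi_1}\cong\Gamma_{\psi_1'}$ and $\Gamma_{\psi_2}\cong\Gamma_{\psi_2'}$. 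I expect the only genuinely delicate step to be this separation of the factors: it is precisely the coprimality of $|G_1|$ and $|G_2|$ that makes the factorization $c_k=a_kb_k$ unique, and without it the conclusion would fail, so I would take care to phrase that splitting as a clean statement about recovering a $\pi_1$-number and a $\pi_2$-number from their product.
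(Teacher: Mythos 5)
Your proof is correct and takes essentially the same route as the paper's: both reduce to the invariants singled out in the Remark (the procreation behavior of the identity in the dual state space and the orders $|\mathrm{per}_n|$), establish multiplicativity via $\mathrm{ker}^{(n)}(\psi_1\times\psi_2)=\mathrm{ker}^{(n)}(\psi_1)\times\mathrm{ker}^{(n)}(\psi_2)$ and $\mathrm{per}_n(\psi_1\times\psi_2)=\mathrm{per}_n(\psi_1)\times\mathrm{per}_n(\psi_2)$, and then use coprimality of $|G_1|$ and $|G_2|$ to read off each factor's invariants from the product. The only difference is that you make explicit why these quantities are invariants of the graph isomorphism type, a step the paper leaves implicit by appealing to its Remark.
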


\begin{proof}
It suffices to show that the procreation behavior of the identity in $\Gamma_{\psi_1\times\psi_2}^{\ast}$ and the various orders of periodic point subgroups $\mathrm{per}_n(\psi_1\times\psi_2)$ uniquely determine the corresponding parameters in $\Gamma_{\psi_1}$ and $\Gamma_{\psi_2}$.

As for the procreation behavior, let $a_n(\psi)$ for $n\in\mathbb{N}^+$ and $\psi$ an endomorphism of a finite group $G$ denote the $n$-th procreation number of $1_G$, i.e., by Theorem \ref{stateGraphTheo}, the index $[\mathrm{ker}^{(n)}(\psi):\mathrm{ker}^{(n-1)}(\psi)]$. It is clear that $\mathrm{ker}^{(n)}(\psi_1\times\psi_2)=\mathrm{ker}^{(n)}(\psi_1)\times\mathrm{ker}^{(n)}(\psi_2)$, so that $a_n(\psi_1\times\psi_2)=a_n(\psi_1)\cdot a_n(\psi_2)$. But since $a_n(\psi_i)\mid |G_i|$, by the coprimality assumption, we can read off the values of $a_n(\psi_1)$ and $a_n(\psi_2)$ from this product. The argumentation for the orders of periodic point subgroups is similar, using $\mathrm{per}_n(\psi_1\times\psi_2)=\mathrm{per}_n(\psi_1)\times\mathrm{per}_n(\psi_2)$.
\end{proof}

In view of the Chinese Remainder Theorem and the fact that $\psi_1\times\psi_2$ is an automorphism (resp.~nilpotent) if and only if $\psi_1$ and $\psi_2$ have the corresponding property, this yields:

\begin{theorem}\label{cyclicCountTheo}
Let $n=2^k\cdot p_1^{k_1}\cdots p_l^{k_l}$ be a positive natural number with the prime factor decomposition displayed such that $k\geq0$, $l\geq0$ and $k_1,\ldots,k_l\geq1$. Furthermore, let $\tau:\mathbb{N}^+\rightarrow\mathbb{N}^+$ denote the divisor number function and let $$\delta_{[k\leq 2]}:=\left\{\begin{array}{cl}1, & \text{if}\hspace{3pt}k\leq 2, \\ 0, & \text{else}\end{array}\right..$$ Then the number of isomorphism types of state spaces of endomorphisms of $\mathbb{Z}/n\mathbb{Z}$ is precisely

$$\mathrm{max}\{2^{k\cdot\delta_{[k\leq 2]}},3k-3\}\cdot\prod\limits_{i=1}^l{k_i(\tau(p_i-1)+1)}.$$

Of these, precisely $$\mathrm{max}\{1,k\}\cdot\prod\limits_{i=1}^l{k_i}$$ are $1$-trees, and precisely $$\mathrm{max}\{1,k,2k-3\}\cdot\prod\limits_{i=1}^l{k_i\tau(p_i-1)}$$ are disjoint unions of cycles.\qed
\end{theorem}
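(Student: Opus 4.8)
The plan is to reduce everything to the primary cyclic case already settled in Lemmas \ref{oddPrimeLem} and \ref{evenPrimeLem}, using the Chinese Remainder Theorem together with the multiplicativity of isomorphism-type counts across coprime factors. First I would invoke the CRT to write $\mathbb{Z}/n\mathbb{Z}\cong(\mathbb{Z}/2^k\mathbb{Z})\times\prod_{i=1}^l(\mathbb{Z}/p_i^{k_i}\mathbb{Z})$, a direct product of its primary components, whose orders are pairwise coprime. Since under this decomposition each stretch $\lambda_a$ corresponds to the product of the stretches $\lambda_{a\bmod p^{v}}$ on the factors, every endomorphism $\varphi$ of $\mathbb{Z}/n\mathbb{Z}$ is a product $\varphi_0\times\varphi_1\times\cdots\times\varphi_l$ of endomorphisms of the primary factors, and conversely every such tuple arises exactly once (the map $\mathrm{End}(\mathbb{Z}/n\mathbb{Z})\to\prod_j\mathrm{End}(\text{factor}_j)$ being the CRT ring isomorphism).

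The conceptual heart of the argument is that isomorphism types of state spaces multiply across the coprime factors. In one direction this is immediate from $\Gamma_\varphi=\Gamma_{\varphi_0}\times\cdots\times\Gamma_{\varphi_l}$ (graph tensor product), so that factorwise isomorphic data yield isomorphic products. The converse is exactly Lemma \ref{coprimeProdLem}: grouping the primary components as one factor times the product of the remaining ones, whose orders are coprime, and iterating, an isomorphism $\Gamma_\varphi\cong\Gamma_{\varphi'}$ forces $\Gamma_{\varphi_j}\cong\Gamma_{\varphi_j'}$ for every $j$. Hence the number of isomorphism types of state spaces of endomorphisms of $\mathbb{Z}/n\mathbb{Z}$ is precisely the product of the corresponding numbers for the primary factors.

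It then remains to substitute the primary counts and to record which state spaces are $1$-trees resp.\ disjoint unions of cycles. For odd $p_i$, Lemma \ref{oddPrimeLem} contributes the factor $k_i(\tau(p_i-1)+1)$, splitting as $k_i$ nilpotent and $k_i\tau(p_i-1)$ automorphism types. For the $2$-part I would verify that $\max\{2^{k\delta_{[k\le 2]}},3k-3\}$ reproduces the count in every case: the trivial group ($k=0$) has a single endomorphism, hence one type; $k=1,2$ give $2$ and $4$ by Lemma \ref{evenPrimeLem}; and $k\ge 3$ gives $3k-3$. Multiplying yields the total. For the refined counts I would use Theorem \ref{composTheo}(4): $\Gamma_\varphi$ is a $1$-tree exactly when $\mathrm{per}(\varphi)$ is trivial, i.e.\ $\varphi$ is nilpotent, and a disjoint union of cycles exactly when $\mathrm{nil}(\varphi)$ is trivial, i.e.\ $\varphi$ is an automorphism. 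Since $\varphi$ is nilpotent (resp.\ an automorphism) iff every $\varphi_j$ is, the number of $1$-trees is the product of the per-factor nilpotent counts ($k_i$ for odd $p_i$, and $\max\{1,k\}$ for the $2$-part), giving $\max\{1,k\}\prod_{i=1}^l k_i$, and the number of cycle-disjoint-unions is the product of the automorphism counts ($k_i\tau(p_i-1)$ and $\max\{1,k,2k-3\}$), giving $\max\{1,k,2k-3\}\prod_{i=1}^l k_i\tau(p_i-1)$.

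The main obstacle is not conceptual but a matter of careful bookkeeping around $p=2$: one must confirm that the single $\max$-expressions uniformly encode the piecewise counts of Lemma \ref{evenPrimeLem} (including the degenerate $k=0$ factor, where the total equals $1$ rather than the sum of the nilpotent and automorphism counts), and one must resist reading the total as the sum of the $1$-tree and cycle counts, since as soon as $n$ has at least two distinct prime divisors there exist endomorphisms that are neither nilpotent nor automorphisms, whose state spaces are of neither special shape. A minor point to state cleanly is the induction extending Lemma \ref{coprimeProdLem} from two to arbitrarily many pairwise-coprime factors.
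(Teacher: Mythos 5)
Your proposal is correct and follows essentially the same route as the paper: the Chinese Remainder Theorem decomposition into primary factors, multiplicativity of isomorphism-type counts via the tensor-product structure in one direction and (iterated) Lemma \ref{coprimeProdLem} in the other, the observation that a product endomorphism is nilpotent resp.\ an automorphism exactly when all factors are, and substitution of the counts from Lemmas \ref{oddPrimeLem} and \ref{evenPrimeLem}. Your explicit verification that the $\max$-expressions encode the piecewise counts for the $2$-part (including the degenerate $k=0$ case) is exactly the bookkeeping the paper leaves implicit.
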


\end{document}